\theoremstyle{plain}
\newtheorem{thm}{Theorem}[section]
\theoremstyle{definition}
\newtheorem{rem}{Remark}
\newcommand{\Z}{\mathbb Z}
\newcommand{\Q}{\mathbb Q}
\newcommand{\B}{\mathbb B}
\newcommand{\C}{\mathbb C}
\newcommand{\pp}{\mathbb{P}}
\newcommand{\al}{\alpha}
\newcommand{\Lam}{\Lambda}
\newcommand{\Gam}{\Gamma}
\newcommand{\Del}{\Delta}
\newcommand{\lam}{\lambda}
\DeclareMathOperator{\PU}{PU}
\newcommand{\ssm}{\smallsetminus}
\newenvironment{pf}{\begin{proof}}{\end{proof}}
\title{Bielliptic ball quotient compactifications and lattices in $\text{PU}(2, 1)$ with finitely generated commutator subgroup}
\author{Luca F.\ Di Cerbo\footnote{This material is based upon work supported by a Grant of the Max Planck Society: ``Complex Hyperbolic Geometry and Toroidal Compactifications''.} \\ \small{Max Planck Institute for Mathematics} \\ \small{\textsf{luca@mpim-bonn.mpg.de}} \and Matthew Stover\footnote{This material is based upon work supported by the National Science Foundation 
under Grant Number NSF DMS-1361000. The second author acknowledges support from U.S.\ National Science Foundation grants DMS 1107452, 1107263, 1107367 ``RNMS: GEometric structures And Representation varieties'' (the GEAR Network).} \\ \small{Temple University}\\ \small{\textsf{mstover@temple.edu}}}
\date{\today}
\begin{document}

\maketitle

\begin{abstract}
We construct two infinite families of ball quotient compactifications birational to bielliptic surfaces. For each family, the volume spectrum of the associated noncompact finite volume ball quotient surfaces is the set of all positive integral multiples of $\frac{8}{3}\pi^{2}$, i.e., they attain all possible volumes of complex hyperbolic $2$-manifolds. The surfaces in one of the two families all have $2$-cusps, so that we can saturate the entire volume spectrum with $2$-cusped manifolds. Finally, we show that the associated neat lattices have infinite abelianization and finitely generated commutator subgroup. These appear to be the first known nonuniform lattices in $\PU(2,1)$, and the first infinite tower, with this property.
\end{abstract}

\section{Introduction}\label{sec:Intro}

A complete complex hyperbolic surface, or a ball quotient surface, is a complete Hermitian surface of constant holomorphic sectional curvature $-1$. More precisely, if $\B^2$ denotes the unit ball in $\C^2$ equipped with the normalized Bergman metric, any ball quotient surface is of the form $Y = \B^2 / \Gam$ where $\Gam \subset \PU(2,1)$ a torsion-free lattice. When $Y$ is noncompact and of finite volume, then $Y$ is a complex $2$-manifold with a finite number of topological ends called the cusps of $Y$. The Baily--Borel or minimal compactification $Y^*$ of $Y$ is a normal projective surface with finitely many singular points in one-to-one correspondence with the cusps of $Y$. If $\Gam$ is neat (see \S \ref{ssec:Ball}), then $Y^{*}$ admits a particularly nice minimal resolution of singularities $X$ that is a smooth projective surface called the smooth toroidal compactification of $Y$ \cite{Ash, Mok}. The exceptional divisors $D$ in $X$ over the singular points of $Y^{*}$ are disjoint smooth elliptic curves with negative normal bundle in $X$. Moreover, it is well known that the pair $(X, D)$ is of log-general type, again see \S \ref{ssec:Ball}. 

Recall that the Kodaira--Enriques classification gives a satisfactory classification of smooth projective surfaces in terms of their Kodaira dimension $\kappa \in \{-\infty, 0, 1, 2\}$. Moreover, we have a quite complete description of surfaces that are not of general type, i.e., the class of surfaces with $\kappa \le 1$. While most smooth toroidal compactifications of neat ball quotients are of general type with ample canonical class (Theorem A in \cite{Luca1}), it is an interesting and open question to decide which projective surfaces can arise as compactifications of ball quotients. More precisely, it would be interesting to classify all smooth projective surfaces that can be realized as smooth toroidal compactifications that are \emph{not} of general type. The first explicit examples were constructed by Hirzebruch in \cite{Hir84}, and the examples in his paper not of general type are all birational to a particular Abelian surface and hence have Kodaira dimension zero. Even though it has not been explicitly observed in the literature, it is known to experts that Hirzebruch's work implies that the volume spectrum of complex hyperbolic $2$-manifolds is unobstructed, i.e., all possible values for the volume of a finite volume ball quotient manifold are achieved.

In \cite{Mom08}, it is claimed that an irregular smooth ball quotient compactification of nonpositive Kodaira dimension is necessarily birational to an Abelian surface. In particular, this result would imply the nonexistence of smooth ball quotient compactifications birational to \emph{bielliptic} surfaces, or for simplicity bielliptic ball quotient compactifications. Unfortunately, the proof of the main result in \cite{Mom08} contains an error, and in fact the result is not true. The purpose of this paper is to produce two infinite families of explicit bielliptic ball quotient compactifications.

\begin{thm}\label{thm:MainThm1}
For any natural number $n$, there exists a smooth projective surface $X_{n}$ birational to a bielliptic surface and neat lattice $\Gam_n$ in $\PU(2, 1)$ of covolume $\frac{8}{3}\pi^{2}n$ such that $X_{n}$ is the smooth toroidal compactification of $\B^{2}/\Gam_{n}$. In particular, the family $\{\B^{2}/\Gam_{n}\}$ saturates the entire admissible volume spectrum of ball quotient surfaces with holomorphic sectional curvature $-1$.
\end{thm}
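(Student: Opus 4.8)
The plan is to produce a single minimal example and then generate the whole family by passing to a tower of cyclic covers. The central tool is the characterization of smooth toroidal compactifications among pairs $(X,D)$ of log-general type: if $X$ is a smooth projective surface and $D$ is a disjoint union of smooth elliptic curves each of negative self-intersection with $K_X + D$ nef and big, then the logarithmic Bogomolov--Miyaoka--Yau inequality $(K_X + D)^2 \le 3\big(e(X) - e(D)\big)$ holds, and equality forces the open surface $X \ssm D$ to carry the complete complex hyperbolic metric, so that $X \ssm D = \B^2/\Gam$ for a torsion-free lattice $\Gam \subset \PU(2,1)$ with smooth toroidal compactification $(X,D)$. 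Since each component of $D$ is elliptic we have $e(D) = 0$, so the condition to be checked is simply $(K_X + D)^2 = 3\,e(X)$, and in the equality case the Hirzebruch proportionality yields the covolume $\tfrac{8}{3}\pi^2\,e(X)$.

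First I would construct the base surface $X_1$ with $e(X_1)=1$, which is forced to be minimal because $e(X \ssm D)$ is a positive integer. Starting from a bielliptic surface $S$, written $(E_1 \times E_2)/G$, one has $K_S$ numerically trivial and $e(S)=0$, and every curve on $S$ has nonnegative self-intersection. A short intersection computation shows that after a single blow-up $\pi\colon X_1 \to S$ at one point, the proper transform of a family $C_1,\dots,C_r$ of elliptic curves of self-intersection $0$ through that point satisfies $(K_{X_1}+D_1)^2 = \sum_i C_i^2 + (r-1) = r-1$, so the proportionality equality $(K_{X_1}+D_1)^2 = 3\,e(X_1)=3$ selects a specific configuration of such curves meeting, pairwise transversally, at one common point. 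The heart of the base case is therefore to exhibit on an explicit (likely CM) bielliptic surface a genuine configuration of self-intersection-zero elliptic curves with exactly this incidence pattern, to verify that $K_{X_1}+D_1$ is nef and big, and to arrange neatness of the resulting $\Gam_1$ by replacing $S$ if necessary with a finite étale cover on which torsion is trivialized and the peripheral holonomy is unipotent.

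Next I would build the tower. Because the compactification is irregular, $q(X_1)=q(S)=1$, the abelianization of $\Gam_1 = \pi_1(X_1 \ssm D_1)$ surjects onto $\Z$, producing, for each $n$, an index-$n$ subgroup $\Gam_n = \ker(\Gam_1 \to \Z/n)$ of covolume $\tfrac{8}{3}\pi^2 n$ by multiplicativity of Euler characteristic under finite covers, together with $e(X_n) = n\,e(X_1) = n$. As a subgroup of a neat lattice, $\Gam_n$ is again neat, and $\B^2/\Gam_n$ is automatically a ball quotient whose toroidal compactification $X_n$ is the corresponding étale cover of $X_1$. The one genuinely delicate point is to confirm that $X_n$ remains birational to a bielliptic surface rather than to an Abelian one: an étale cover of a bielliptic surface is bielliptic or Abelian, the two cases being distinguished by the irregularity. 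I would realize the chosen cyclic cover as the base change of the Albanese fibration $S \to E$ along a degree-$n$ isogeny $E' \to E$ and check from the induced $G$-action on the fiber that the irregularity stays $1$, which excludes the Abelian case and pins down the bielliptic birational type.

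The main obstacle is the explicit base construction: simultaneously producing an honest elliptic-curve configuration of self-intersection zero on a bielliptic surface whose single blow-up gives disjoint negative curves \emph{and} forces the log-BMY equality is extremely rigid, and it is precisely the apparent scarcity of such configurations that makes the erroneous nonexistence claim of \cite{Mom08} superficially plausible. Verifying the proportionality equality (rather than merely the inequality) is exactly what certifies the ball quotient structure, so the bulk of the work is the intersection-theoretic bookkeeping on $S$ together with the nefness and bigness of $K_{X_1}+D_1$. Once the minimal case is in hand, both the tower and the saturation of the volume spectrum follow formally, the latter because $\tfrac{8}{3}\pi^2$ is the least admissible volume and the covers realize every positive integer multiple of it.
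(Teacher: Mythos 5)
Your overall skeleton matches the paper's: certify the ball quotient structure via equality in the logarithmic Bogomolov--Miyaoka--Yau inequality, compute volumes by Harder's Gauss--Bonnet, and generate the family as a tower of covers induced by isogenies of the Albanese base (the paper builds each $B_n = (G\times G_n)/\varphi$ explicitly rather than as $\ker(\Gam_1\to\Z/n)$, which spares it from checking that your cyclic covers extend \'etale across the boundary and remain bielliptic, but this part of your plan is essentially sound). The fatal problem is your base case. You insist on a configuration of \emph{smooth} elliptic curves $C_1,\dots,C_r$ with $C_i^2=0$ passing through one common point, pairwise transverse there and meeting nowhere else; your (correct) computation $(K_{X_1}+D_1)^2 = \sum_i C_i^2 + (r-1)$ then forces $r=4$. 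But no such configuration exists on any bielliptic surface $S$: since $b_1(S)=2$, $b_2(S)=2$ and $p_g(S)=0$, the N\'eron--Severi group has rank exactly $2$ with intersection form of signature $(1,1)$, so the isotropic classes form two rays, spanned by the fibers of the two natural fibrations. Every irreducible curve with self-intersection $0$ lies on one of these two rays, and two curves on the same ray have intersection number $0$. By pigeonhole, among your four (indeed, among any three) curves two would lie on the same ray, contradicting the requirement that all pairwise intersection numbers equal $1$. So the configuration you call ``the heart of the base case'' is provably empty; it is no accident that such configurations do exist on abelian surfaces (where $\mathrm{NS}$ can have rank $4$, e.g.\ $E_\rho\times E_\rho$ with the four graphs through the origin), which is exactly the Hirzebruch-type case the theorem is designed to go beyond.

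The missing idea is to abandon smoothness of the components on $S$. The paper takes an \emph{irreducible} curve $C_n \subset B_n$ with an ordinary triple point (so $p_a(C_n)=4$, $C_n^2=6$; its class is not isotropic, evading the lattice obstruction), obtained as the image of three elliptic curves $E_1,E_2,E_3$ on the abelian \'etale cover $A_n = G\times G_n$ that are cyclically permuted by the covering automorphism $\varphi([w,z])=[\rho w, z+a]$, together with the Albanese fiber through each singular point. A multiplicity-$3$ point contributes $-9$ to the proper transform's self-intersection, so one blow-up per singular point yields disjoint smooth elliptic curves $T_0^2=-3n$, $T_i^2=-1$, and $(K_{X_n}+D_n)^2 = -n+3n+n = 3n = 3\overline{c}_2$. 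Two further points: your fallback of ``replacing $S$ by a finite \'etale cover'' to arrange neatness would multiply the Euler number and destroy the covolume-$\frac{8}{3}\pi^2$ base case, so neatness of $\Gam_1$ must be verified for the example itself (the paper cites \cite{DS15b} for this); and note that $X_1$ is a blow-up of $S$, hence is not minimal --- only its volume is minimal.
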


Furthermore, the associated smooth compactifications $X_{n}$ have the property that their Albanese variety is always an elliptic curve. We will prove that this in fact gives a holomorphic fibration of the ball quotient with no multiple fibers. Moreover, we will show that the free rank of $\text{H}_{1}(\B^{2}/\Gam_{n};\Z)$ is always two. Using these facts along with a topological argument due to Nori \cite{Nor83}, we obtain the following group theoretical result. For more on this circle of ideas, we refer to the results previously obtained by the second author \cite{Sto15}.

\begin{thm}\label{thm:MainThm2}
There exists an infinite sequence of nested neat lattices $\Gam_n$ in $\PU(2, 1)$ with infinite abelianization such that the commutator subgroup $[\Gam_{n}, \Gam_{n}]$ is finitely generated.
\end{thm}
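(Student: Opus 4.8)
The plan is to translate everything into the topology of the open ball quotients $\B^{2}/\Gam_{n}$. Since $\Gam_n$ is neat it is torsion-free and $\B^{2}/\Gam_{n}$ is aspherical, so I may identify $\Gam_n$ with $\pi_1(\B^{2}/\Gam_{n})$, whence $\Gam_n^{\mathrm{ab}}\cong H_1(\B^{2}/\Gam_{n};\Z)$ and $[\Gam_n,\Gam_n]$ is the kernel of the Hurewicz map. The first assertion is then immediate from the facts recorded before the statement: the free rank of $H_1(\B^{2}/\Gam_{n};\Z)$ is two, so $\Gam_n^{\mathrm{ab}}$ surjects onto $\Z^2$ and is in particular infinite. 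The existence of an infinite, nested sequence is furnished directly by the tower of finite covers underlying the construction in Theorem \ref{thm:MainThm1}, where the covolumes grow linearly in $n$; the real content is the finite generation of $[\Gam_n,\Gam_n]$.

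I would first reduce this to a statement about a single homomorphism. Let $E$ be the Albanese variety of $X_n$, an elliptic curve by hypothesis, and let $f\colon \B^{2}/\Gam_{n}\to E$ be the induced holomorphic fibration, with $\psi = f_*\colon \Gam_n \to \pi_1(E)\cong \Z^2$. Because $\Z^2$ is abelian we have $[\Gam_n,\Gam_n]\subseteq K:=\ker\psi$, and since $b_1(X_n)=2$ equals the free rank of $H_1(\B^{2}/\Gam_{n};\Z)$, the map $\psi$ realizes the projection of $\Gam_n^{\mathrm{ab}}$ onto its free part. Hence $K/[\Gam_n,\Gam_n]$ is isomorphic to the torsion subgroup of $\Gam_n^{\mathrm{ab}}$, which is finite because $\Gam_n$ is finitely presented. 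Thus $[\Gam_n,\Gam_n]$ has finite index in $K$, and it suffices to prove that $K$ is finitely generated.

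To control $K$ I would feed the fibration $f$ into Nori's method. The general fiber $F_0$ is a compact Riemann surface with finitely many punctures, the punctures recording the meeting of a fiber with the compactifying elliptic curves. The decisive hypothesis is that $f$ has no multiple fibers, which is exactly the condition under which Nori's topological argument produces the exact sequence
\[
\pi_1(F_0)\longrightarrow \pi_1(\B^{2}/\Gam_{n})\longrightarrow \pi_1(E)\longrightarrow 1 .
\]
Exactness identifies $K=\ker\psi$ with the image of $\pi_1(F_0)$. As $F_0$ is a punctured compact Riemann surface its fundamental group is finitely generated, so $K$ is finitely generated, and therefore so is $[\Gam_n,\Gam_n]$, completing the proof.

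The main obstacle, and where I expect the genuine work to sit, is justifying that Nori's sequence terminates in $\pi_1(E)\cong\Z^2$ rather than in the fundamental group of a punctured curve. This requires checking that the compactifying elliptic curves are \emph{horizontal} for $f$ — so that deleting them only punctures the fibers and leaves the base $E$ intact — together with confirming that the fibers $F_0$ are connected (e.g.\ via Stein factorization, using minimality of the Albanese) and that the no-multiple-fibers property genuinely holds on the open surface. These are precisely the geometric assertions about the $X_n$ made in the discussion preceding the statement, so the proof reduces to packaging that geometry into the hypotheses of Nori's theorem and then reading off the group theory of the previous paragraphs.
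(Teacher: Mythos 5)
Your proposal is correct and follows essentially the same route as the paper's own proof: compose the Albanese fibration of $X_n$ with the blowdown and inclusion to get $\psi_n : M_n \to E_n$, apply Nori's Lemma 1.5 (all fibers being reduced punctured curves --- generic fibers thrice-punctured tori, the $n$ singular fibers four-times-punctured rational curves) to obtain the exact sequence $\pi_1(F)\to\Gam_n\to\Z^2\to 1$ and hence finite generation of $\ker(\psi_n)_*$, and then use $b_1(M_n)=b_1(X_n)=2$ (Theorem \ref{thm:MainSTCb1}) to conclude that $[\Gam_n,\Gam_n]$ has finite index in that kernel. The geometric checks you flag at the end (horizontality of the compactifying curves, connected reduced fibers, no multiple fibers) are precisely the facts the paper reads off directly from the explicit construction, so your outline matches the paper's argument with no gap.
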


The lattices $\Gam_{n}$ appear to be the first known examples of nonuniform lattices in $\PU(2, 1)$ with infinite abelianization and finitely generated commutator subgroup, and the first infinite tower of lattices of any kind. It is well-known amongst experts that the so-called Cartwright--Steger lattice \cite{CS} provides a uniform lattice with infinite abelianization and finitely generated commutator subgroup. One can prove this directly using the fact that it fibers over an elliptic curve with no multiple fibers (e.g., see \cite[Main Thm.]{CKY}). Unlike that argument, our proof does not rely on computer calculations.

We prove finite generation by showing that $[\Gam_{n}, \Gam_{n}]$ is of finite index in the kernel of a homomorphism of $\Gam_n$ onto $\Z^2$, and we prove that this kernel is finitely generated. Recall that in \cite{Sto15}, the second author constructed the first examples of lattices in $\PU(2, 1)$ that maps onto $\Z$ with finitely generated kernel. In contrast, cocompact lattices in $\PU(2, 1)$ that map onto $\Z$ with infinitely generated kernel must arise from ball quotients that are virtually fibered over a hyperbolic Riemann surface \cite{Napier}. The examples presented here are quite different, not only from a group theoretical point of view, as the associated ball quotients fiber over an elliptic curve with as generic fiber either an elliptic curve with three punctures or an elliptic curve with four punctures.

An important tool in the proof is the following general result, first stated in recent work of Kasparian and Sankaran \cite[Cor.\ 4.5]{Kasparian--Sankaran}, which allows us to conclude that the open ball quotient and its smooth toroidal compactification have the same first betti number.

\begin{thm}\label{thm:MainSTCb1}
Let $M = \B^2 / \Gam$ be a noncompact ball quotient admitting a smooth toroidal compactification $X$. Then $b_1(M) = b_1(X)$. Equivalently, the first betti number of $M$ equals its first $L^2$ betti number.
\end{thm}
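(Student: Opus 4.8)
The plan is to establish $b_1(M) = b_1(X)$ by comparing the topology of $M$ with that of its compactification $X = M \cup D$, where $D = \bigsqcup_i E_i$ is the disjoint union of the exceptional elliptic curves added at the cusps. The key structural input is that $X$ is obtained from $M$ by attaching a boundary that, near each cusp, looks like a tubular neighborhood of an elliptic curve with negative normal bundle. The first thing I would do is set up a careful model of the ends of $M$: each cusp has a neighborhood $N_i$ that is a punctured disk bundle over $E_i$, and its boundary is a compact $3$-manifold $T_i$ which is an $S^1$-bundle (a nilmanifold) over the torus $E_i$. The compactification replaces each punctured-disk-bundle end $N_i \ssm E_i$ with the full disk bundle $N_i$, so $X$ deformation retracts onto $M$ with the cusps filled in by the zero sections $E_i$.

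The core of the argument will be a Mayer--Vietoris computation comparing $H_*(M)$ and $H_*(X)$. Writing $X = M' \cup_{\sqcup T_i} \bigsqcup_i N_i$, where $M'$ is $M$ with open cusp neighborhoods removed (so $M'$ is a compact manifold with boundary $\sqcup_i T_i$, homotopy equivalent to $M$) and $N_i$ is the closed disk bundle (homotopy equivalent to $E_i$), the relevant Mayer--Vietoris sequence in rational homology reads
\begin{equation}
\cdots \to \bigoplus_i H_1(T_i;\Q) \to H_1(M';\Q) \oplus \bigoplus_i H_1(E_i;\Q) \to H_1(X;\Q) \to \bigoplus_i H_0(T_i;\Q) \to \cdots
\end{equation}
The plan is to understand precisely the map $H_1(T_i;\Q) \to H_1(M';\Q) \oplus H_1(E_i;\Q)$. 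Each $T_i$ is an $S^1$-bundle over $E_i$ with Euler number equal to the (nonzero) self-intersection $E_i^2 < 0$, so the Gysin sequence forces the $S^1$-fiber class to be torsion in $H_1(T_i;\Z)$; hence $H_1(T_i;\Q) \cong H_1(E_i;\Q)$, generated by the two torus directions. The composite $H_1(T_i;\Q) \to H_1(E_i;\Q)$ (the second projection of the Mayer--Vietoris map, induced by the bundle projection $T_i \to E_i$) is therefore an isomorphism. This is the crucial point: it shows that the Mayer--Vietoris connecting map into $H_1(X;\Q)$ contributes nothing new from the $E_i$ factors, and by diagram-chasing it identifies $H_1(X;\Q)$ with the cokernel of $\bigoplus_i H_1(T_i;\Q) \to H_1(M';\Q)$, which is exactly $b_1(M)$.

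The main obstacle, and the step requiring the most care, is verifying that the fiber class of the circle bundle $T_i \to E_i$ is rationally trivial and that the map $H_1(T_i;\Q) \to H_1(E_i;\Q)$ is an isomorphism; this is precisely where the negativity of the normal bundle $E_i^2 < 0$ enters, and it is what distinguishes toroidal compactifications from the cusp structure of, say, general locally symmetric spaces. I would invoke the fact (recalled in \S\ref{ssec:Ball}) that the $E_i$ are smooth elliptic curves with negative normal bundle to guarantee the Euler number is nonzero. For the equivalence with the $L^2$ Betti number statement, I would then invoke that $M$ is a complete finite-volume complex hyperbolic $2$-manifold: by the $L^2$-index theory of Gromov and the known computation of $L^2$-Betti numbers of finite-volume ball quotients, the first $L^2$-Betti number $b_1^{(2)}(M)$ coincides with $b_1(X)$ since $X$ is the natural compact manifold carrying the same $L^2$-cohomology as $M$ (via the identification of $L^2$-cohomology with the image of compactly-supported cohomology, i.e. $L^2$-harmonic forms), giving $b_1(M) = b_1(X) = b_1^{(2)}(M)$.
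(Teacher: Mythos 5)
Your decomposition of $X$ is the same one the paper uses (a tubular neighborhood of the compactifying divisor glued to the cusped manifold along the nilmanifold cross-sections, with Mayer--Vietoris in rational homology), but your key mechanism is genuinely different. The paper does a pure dimension count: exactness gives $b_1(M) = b_1(X) - \ell$, where $\ell$ is the rank of the connecting map $H_2(X) \to H_1(V)$, and then $\ell = 0$ is forced by importing the known inequality $b_1(M) \ge b_1(X)$ (citing Koll\'ar). You instead kill the connecting map intrinsically: each $T_i \to E_i$ is a circle bundle with Euler number $E_i^2 \neq 0$, so by Gysin the fiber class is torsion and $p_* : H_1(T_i;\Q) \to H_1(E_i;\Q)$ is an isomorphism; hence the Mayer--Vietoris map $f = (j_*, -p_*)$ is injective, the connecting map vanishes, and no external input is needed. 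This is a real gain: your argument is self-contained where the paper's leans on \cite{Kollar}, and it isolates exactly where negativity of the normal bundles enters. (For the $L^2$ clause, both you and the paper simply cite the known identification of $b_1^{(2)}(M)$ with $b_1(X)$; that is fine.)

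However, your final diagram-chase sentence is false as literally written, and you should repair it. You claim $H_1(X;\Q)$ is identified with the cokernel of $j_* : \bigoplus_i H_1(T_i;\Q) \to H_1(M';\Q)$ and that this cokernel has dimension $b_1(M)$. Neither holds in general: $\mathrm{coker}(j_*)$ has dimension $b_1(M) - \dim \mathrm{im}(j_*)$, and $j_*$ need not vanish rationally. Indeed, in the paper's own examples the cusp cross-section over the multisection $T_0$ surjects rationally onto $H_1$ of the Albanese elliptic curve, so $\mathrm{im}(j_*) = H_1(M;\Q)$ and $\mathrm{coker}(j_*) = 0$, while $b_1(X) = 2$. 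The correct chase, using exactly what you established, is: (i) the $H_0$ portion of the sequence (which you must check, as the paper does) shows $H_1(X;\Q) \to \bigoplus_i H_0(T_i;\Q)$ is zero, since $H_0(\bigsqcup T_i) \to H_0(M') \oplus \bigoplus_i H_0(E_i)$ is injective by connectedness; hence $H_1(X;\Q) \cong \mathrm{coker}(f)$, the cokernel of the full map $f$, not of $j_*$ alone; (ii) because $p_*$ is an isomorphism on each summand, $\mathrm{coker}(f) \cong H_1(M';\Q)/j_*(\ker p_*) = H_1(M';\Q)$ --- explicitly, $m \mapsto [(m,0)]$ is surjective since $(m,e) \equiv \bigl(m + j_* p_*^{-1}(e), 0\bigr)$ modulo $\mathrm{im}(f)$, and injective since $(m,0) = f(x)$ forces $p_* x = 0$, hence $x = 0$. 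This gives $H_1(X;\Q) \cong H_1(M;\Q)$ directly, completing your argument.
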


Recall that the first betti number $b_1(M)$ is the rank of $H_1(M; \Q)$. For the equality between $b_1(X)$ and the first $L^2$ betti number of $M$, see \cite{Murty--Ramakrishnan} (which also proves Theorem \ref{thm:MainSTCb1} for $n$-dimensional ball quotients, $n \ge 3$). We note that, while \cite{Murty--Ramakrishnan} only studies particular arithmetic lattices, the argument for this equality relies only on Hartogs' extension theorem, and is completely general. Kasparian and Sankaran proved Theorem \ref{thm:MainSTCb1} using the fundamental group, and we give an alternate very elementary proof using the Mayer--Vietoris sequence (cf.\ \cite{Zucker}).

The paper is organized as follows. Section \ref{sec:Prelim} collects preliminary facts about the ball and its Bergman metric, smooth toroidal compactifications, the Bogomolov--Miyaoka--Yau inequality, and the basic theory of bielliptic surfaces. In Sections \S \ref{ssec:Hirz} and \S \ref{sec:Proof} we give the arguments of the proofs of Theorems \ref{thm:MainThm1} and \ref{thm:MainThm2}. Finally, in the last section, \S \ref{App}, we construct a different family of ball quotients $\B^{2}/\Lam_{n}$ which can be alternatively used in the proofs of Theorems \ref{thm:MainThm1} and \ref{thm:MainThm2}. These surfaces have smooth toroidal compactification again biholomorphic to $X_{n}$, but they are quite different from the ball quotients $\B^{2}/\Gam_{n}$. In particular, all surfaces in the family $\B^{2}/\Lam_{n}$ have exactly two cusps, while for any $n\geq1$ the surface $\B^{2}/\Gam_{n}$ always has $n+1$ cusps. It follows that we can saturate the whole volume spectrum with $2$-cusped ball quotient surfaces (it follows from \cite{Sto15} that one can do this with $4$-cusped ball quotients). This is the last result presented in this paper.

\begin{thm}\label{thm:MainThm3}
For any natural number $n$, there exists a neat lattice $\Lam_n$ in $\PU(2, 1)$ of covolume $\frac{8}{3}\pi^{2}n$ such that the associated finite volume ball quotient $\B^{2}/\Lam_{n}$ has exactly two cusps.\\
\end{thm}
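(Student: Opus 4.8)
The plan is to keep the surface $X_n$ constructed for Theorem~\ref{thm:MainThm1} and simply equip it with a new boundary divisor having two components. Since $X_n$ is birational to a bielliptic surface, its Albanese map onto an elliptic curve $E$ exhibits $X_n$ as a blow-up of a surface carrying two distinct elliptic fibrations, and hence a large supply of elliptic curves; the boundary $D_{\Gam}=\sum_{i=1}^{n+1}A_i$ used for $\Gam_n$ consists of $n+1$ disjoint elliptic curves taken from one of these families. First I would exhibit a competing configuration $D_{\Lam}=B_1+B_2$ of two disjoint smooth elliptic curves of negative normal bundle, arising as multisections of the Albanese fibration $X_n\to E$ meeting a general fibre in four points (consistent with the four-punctured generic fibre anticipated in the introduction). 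The covolume is then automatic: the logarithmic Gauss--Bonnet formula gives $\mathrm{vol}(X_n\ssm D_{\Lam})=\tfrac{8}{3}\pi^2 c_2(X_n)=\tfrac{8}{3}\pi^2 n$, since removing elliptic curves does not change the Euler number and $c_2(X_n)=n$ by Theorem~\ref{thm:MainThm1}, so the entire content is to show that $(X_n,D_{\Lam})$ is again a smooth toroidal compactification.

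Concretely, I would proceed in three steps. After producing $B_1,B_2$ explicitly and checking that they are smooth, disjoint, and of negative self-intersection, I would verify the equality case of the logarithmic Bogomolov--Miyaoka--Yau inequality recalled in \S\ref{sec:Prelim}, namely $(K_{X_n}+D_{\Lam})^2=3\,c_2(X_n)$. Because $B_1$ and $B_2$ are elliptic, adjunction gives $K_{X_n}\cdot B_j=-B_j^2$, so $(K_{X_n}+D_{\Lam})^2=K_{X_n}^2-(B_1^2+B_2^2)$; comparing with the identical expression for $D_{\Gam}$ reduces the equality to the single numerical identity $(-B_1^2)+(-B_2^2)=\sum_{i=1}^{n+1}(-A_i^2)$, i.e.\ the two boundaries must carry the same total negativity. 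The remaining step is to show that $K_{X_n}+D_{\Lam}$ is nef and big; granting this, the logarithmic uniformization theorem in the equality case identifies $\B^2/\Lam_n:=X_n\ssm D_{\Lam}$ as a finite-volume ball quotient with smooth toroidal compactification $(X_n,D_{\Lam})$, whose cusps biject with the components of $D_{\Lam}$ and are therefore exactly two. Neatness of $\Lam_n$ follows either from realising it as a subgroup of the neat lattice of Theorem~\ref{thm:MainThm1} or directly from the unipotence of the peripheral holonomy forced by the elliptic boundary curves.

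The hard part will be the existence and positivity underlying these steps. I must absorb the entire negativity $3c_2(X_n)-K_{X_n}^2$ of the $(n+1)$-component boundary into just two curves, and this is exactly where the bielliptic geometry is indispensable: the second elliptic fibration lets me concentrate the noncompactness along two fibres rather than spreading it over $n+1$, and the $n$-dependence of $X_n$ (a cyclic cover in the Albanese direction) must be arranged so that the self-intersections $B_1^2,B_2^2$ come out precisely as negative as the numerical identity demands. Certifying nefness of $K_{X_n}+D_{\Lam}$ is the second obstacle; here I would test it against the generators of the N\'eron--Severi group of the underlying bielliptic surface together with the exceptional curves of the blow-up, exploiting that $K_{X_n}+D_{\Gam}$ is already known to be nef for $\Gam_n$ and that the two log-canonical classes differ by an explicit, numerically controlled combination of fibres.
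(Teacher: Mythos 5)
Your high-level strategy coincides with the paper's: keep the surfaces $X_n$ from Theorem \ref{thm:MainThm1}, replace the $(n+1)$-component boundary by a two-component configuration of disjoint elliptic curves, verify the equality case of the logarithmic Bogomolov--Miyaoka--Yau inequality together with bigness and nefness of the log-canonical divisor, and get the volume for free from the Gauss--Bonnet formula. Your numerical reduction is also correct: since $\chi$ of an elliptic curve vanishes, the equality case amounts to $(-B_1^2)+(-B_2^2)=3c_2(X_n)-K_{X_n}^2=4n$. However, the proposal never actually produces the curves $B_1,B_2$, and this existence statement \emph{is} the content of the theorem; you yourself flag it as ``the hard part'' and leave it open. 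Worse, both heuristics you offer for finding them point the wrong way. Your suggestion to ``concentrate the noncompactness along two fibres'' of the second elliptic fibration cannot work: all $n$ blown-up points (the singular points of $C_n$) lie on a \emph{single} fiber of $\phi_n: B_n\rightarrow\pp^1$, namely the curve $F_n$ that is the image of the three curves $H_i=[\frac{2}{3}+(i-1)a,z]$ permuted by $\varphi$; consequently only that one fiber acquires negative self-intersection in $X_n$, while every other fiber of $\phi_n$ keeps self-intersection $0$ and so can never be a boundary component of a toroidal compactification. The paper's actual solution is asymmetric: it keeps $T_0$, the proper transform of the singular irreducible $3$-section $C_n$ (self-intersection $-3n$), exactly as in the $\Gam_n$ family, and replaces the $n$ Albanese fibers by the single curve $T_1$, the proper transform of $F_n$, which passes once through each of the $n$ blown-up points and hence has self-intersection $-n$; this gives total negativity $3n+n=4n$ and $T_0\cdot T_1=0$ after the blow-up. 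Your count of ``four points'' on a general Albanese fibre is also off: both $T_0$ and $T_1$ are $3$-sections of the Albanese map, so the boundary meets a general fibre in six points.

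Two further steps of your outline would not survive scrutiny as written. First, nefness of $K_{X_n}+D_{\Lam}$ is deferred entirely, and it genuinely depends on which curves you chose, so nothing can be certified until the configuration is fixed. Second, your neatness argument is flawed on both alternatives: $\Lam_n$ is not a subgroup of any $\Gam_m$ (the two families of ball quotients are explicitly distinct, with different cusp counts, and there is no covering between them), and unipotence of the peripheral holonomy is strictly weaker than neatness, which is a condition on the eigenvalues of \emph{all} elements of the lattice. The paper instead uses the tower structure $\cdots\subset\Lam_n\subset\cdots\subset\Lam_1$ and the identification of $\B^2/\Lam_1$ with Example 3 in Section 6 of \cite{DS15b}, where neatness of $\Lam_1$ is established; neatness of all $\Lam_n$ then follows since subgroups of neat lattices are neat.
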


\noindent\textbf{Acknowledgements}. Both authors thank the Max Planck Institute for Mathematics for its hospitality while this project was being completed. The first author thanks the MPIM for an ideal working environment throughout the whole duration of this project. The authors also thank Stefano Vidussi for comments on an early version of the paper.

\section{Preliminaries}\label{sec:Prelim}

\subsection{Smooth toroidal compactifications, their volumes and the Bogomolov--Miyaoka--Yau inequality}\label{ssec:Ball}

Let $\B^2$ be the unit ball in $\C^2$ with its Bergman metric. The group of biholomorphic isometries of $\B^2$ is isomorphic to $\PU(2, 1)$. See the \cite{Goldman} for more on its geometry. Let $\Gam \subset \PU(2, 1)$ be a nonuniform torsion-free lattice, so $\B^{2}/\Gam$ is a noncompact finite volume complex hyperbolic manifold. Suppose further that $\Gam$ is \emph{neat}, i.e., that the subgroup of $\C$ generated by the eigenvalues (of an appropriate lift to $\mathrm{SU}(2,1)$) of any fixed nontrivial element of $\Gam$ is torsion-free. In particular, any neat lattice is automatically torsion-free. This implies that $Y$ admits a particularly nice smooth toroidal compactification $X$ by adding a collection of disjoint elliptic curves $D$. Given the pair $(X, D)$, the line bundle $K_{X}+D$ is big and nef, hence $(X, D)$ is of log-general type, i.e., the Kodaira dimension of $K_{X}+D$ is maximal and the intersection number $(K_{X}+D)^{2}$ is strictly positive. See \cite{Ash}, \cite{Borel2}, and \cite{Mok} for the explicit construction and more details.

Let $Y$ be the quotient of $\B^2$ by a neat lattice and $X$ its smooth toroidal compactification. Then $X \ssm Y$ consists of a finite union of disjoint elliptic curves $T_i$, each having negative normal bundle in $X$, i.e., $T^{2}_{i}<0$ for all $i$. Let $D = \sum T_i$. Hirzebruch--Mumford proportionality \cite{Mumford} implies that
\begin{equation}\label{eq:HMp}
\overline{c}_1^2(X, D) = 3 \overline{c}_2(X, D),
\end{equation}
where $\overline{c}^{2}_{1}$ and $\overline{c}_{2}$ are the log-Chern numbers of the pair $(X, D)$. Recall that $\overline{c}_{2}(X, D)$ is the topological Euler number of $X\ssm D$ and $\overline{c}^{2}_{1}(X, D)$ is the self-intersection of the log-canonical divisor $K_{X}+D$.

Next, let $X$ be a smooth projective surface and let $D$ be a normal crossings divisor on $X$ such that $K_{X}+D$ is big and nef. Then, the logarithmic version of Yau's solution to the Calabi conjecture \cite{TianY} implies that the log-Chern numbers of the pair $(X, D)$ satisfy the so-called logarithmic Bogomolov--Miyaoka--Yau inequality
\[
\overline{c}_1^2(X, D) \le 3 \overline{c}_2(X, D).
\]
Furthermore, in the case of equality
\[
Y = X \ssm D = \B^2 / \Gam
\]
for some torsion-free lattice $\Gam \subset \PU(2, 1)$.

Summarizing, a pair $(X, D)$ with $X$ smooth and $D$ a simple normal crossings divisor with $K_{X}+D$ big and nef, hence of log-general type, achieves equality in the logarithmic Bogomolov--Miyaoka--Yau inequality if and only if it is a smooth toroidal compactification of a ball quotient surface.

\bigskip

We conclude this section by recalling Harder's generalization of the Gauss--Bonnet formula for noncompact finite volume arithmetically defined locally symmetric varieties \cite{Harder}. This formula is important for us because it gives the basic structure of the volume spectrum of ball quotients. If the holomorphic sectional curvature of a nonuniform neat ball quotient surface is normalized to be $-1$, the generalized Gauss--Bonnet formula implies that the Euler characteristic of $\B^{2}/\Gam$ is proportional to its Riemannian volume. More precisely,
\[
\chi(\B^{2}/\Gam)=\frac{3}{8\pi^{2}}\text{Vol}_{-1}(\B^{2}/\Gam),
\] 
where $\chi(\B^{2}/\Gam)$ is the topological Euler characteristic of the ball quotient $\B^{2}/\Gam$. We therefore conclude that the normalized volume spectrum can at most be the set of all positive integral multiples of $\frac{8}{3}\pi^{2}$. The volume spectrum then coincides with the set of all positive integral multiples of $\frac{8}{3}\pi^{2}$ if and only if we can find ball quotient surfaces with topological Euler number $n$ for any $n\geq 1$. If this is the case it is natural to say that the volume spectrum is unobstructed, as there are no constraints other than the obvious restriction coming from the fact that the Euler number is a positive integer.

\subsection{Bielliptic Surfaces and their basic properties}

In this section, we give the definition of \emph{bielliptic} surface, recall their place in the Kodaira--Enriques classification, and give some of their topological properties. By definition, a bielliptic surface is a minimal surface of Kodaira dimension zero and irregularity one. As shown by Bagnera and de Franchis more than a century ago, all such surfaces can be obtained as quotients of products of elliptic curves. More precisely, we have the following result for which we refer to \cite[Ch. VI]{Bea}. 

\begin{thm}[Bagnera--de Franchis, 1907]\label{bagnera}
Let $E_{\lam}$ and $E_{\tau}$ denote elliptic curves associated with the respective lattices $\Z[1, \lam]$ and $\Z[1, \tau]$ in $\C$ and $K$ be a group of translations of $E_{\tau}$ acting on $E_{\lam}$ such that $E_{\lam}/K=\pp^{1}$. Then every bielliptic surface is of the form $(E_{\lam}\times E_{\tau})/K$ where $K$ has one of the following types:
\begin{enumerate}

\item $K=\Z / 2 \Z$ acting on $E_{\lam}$ by $x\rightarrow -x$;

\item $K=\Z / 2 \Z\times\Z / 2 \Z$ acting on $E_{\lam}$ by
\[
x\rightarrow -x \quad\textrm{and}\quad x\rightarrow x+\al_{2},
\]
where $\al_{2}$ is a $2$-torsion point;

\item $K=\Z / 4 \Z$ acting on $E_\lam$ by $x\rightarrow \lam x$ with $\lam = i$;

\item $K=\Z / 4 \Z\times \Z / 2 \Z$ acting on $E_\lam$ by
\[
x\rightarrow \lam x \quad\textrm{and}\quad x\rightarrow x+\frac{1+\lam}{2},
\]
with $\lam = i$;

\item $K=\Z / 3 \Z$ acting on $E_{\lam}$ by $x\rightarrow \lam x$ with $\lam=e^{\frac{2\pi i}{3}}$;

\item $K=\Z / 3 \Z\times \Z / 3 \Z$ acting on $E_{\lam}$ by
\[
x\rightarrow \lam x \quad\textrm{and}\quad x\rightarrow x+\frac{1-\lam}{3},
\]
with $\lambda=e^{\frac{2\pi i}{3}}$;

\item $K=\Z / 6 \Z$ acting on $E_{\lam}$ by $x\rightarrow \zeta x$ with $\lam=e^{\frac{2\pi i}{3}}$ and $\zeta=e^{\frac{\pi i}{3}}$.

\end{enumerate}
\end{thm}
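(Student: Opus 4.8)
The plan is to follow the modern account in \cite[Ch.~VI]{Bea}: first extract the numerical invariants forced by $\kappa = 0$ and $q = 1$, then produce the product-quotient structure from the Albanese fibration, and finally enumerate the admissible finite groups. Since $\chi(\mathcal{O}_S) = 1 - q + p_g \ge 0$ for any surface of nonnegative Kodaira dimension, the hypothesis $q = 1$ forces $p_g = 0$ and $\chi(\mathcal{O}_S) = 0$; hence $b_1 = 2$, $b_2 = 2$, the topological Euler number $e(S) = 0$, and (by minimality) $K_S^2 = 0$. As $q = 1$, the Albanese variety $B := \mathrm{Alb}(S)$ is an elliptic curve and the Albanese map $a \colon S \to B$ is a surjective morphism with connected fibers.

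The first geometric step is to show that $a$ is a smooth fiber bundle whose fiber is an elliptic curve $E_\lam$. Writing $F$ for a general fiber and using additivity of the Euler number over the base, $e(S) = e(B)\,e(F) + \sum_b \bigl(e(a^{-1}(b)) - e(F)\bigr)$ with each term in the sum nonnegative; since $e(B) = 0$ and $e(S) = 0$, there are no singular fibers and $a$ is a holomorphic fiber bundle. The fiber genus cannot be $0$ (that would make $S$ ruled, $\kappa = -\infty$) nor $\ge 2$ (that would force $\kappa(S) \ge 1$), so $F = E_\lam$ is elliptic.

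Next I would invoke the structure theory of isotrivial elliptic bundles (again \cite[Ch.~VI]{Bea}): such a bundle over an elliptic base with $\kappa = 0$ is a quotient $S = (E_\lam \times E_\tau)/K$, where $E_\tau$ is the elliptic curve obtained as the finite \'etale cover of $B$ trivializing the monodromy, $K = \mathrm{Gal}(E_\tau/B)$ acts on $E_\tau$ by free translations with $E_\tau/K = B$, and $K$ acts on $E_\lam$ through $\mathrm{Aut}(E_\lam) = E_\lam \rtimes G_0$, with $G_0$ the cyclic group of automorphisms fixing the origin. Because $K$ acts freely, hence faithfully, by translations on $E_\tau$, the group $K$ is abelian of rank at most two. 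Writing $G$ for the image of $K$ under the projection $\mathrm{Aut}(E_\lam) \to G_0$ onto the rotation part, the holomorphic one-forms on $S$ are the $K$-invariant one-forms on $E_\lam \times E_\tau$; since $K$ fixes $dz_\tau$ and multiplies $dz_\lam$ by the rotation, one gets $q(S) = 1$ precisely when $G \ne 1$ (the case $G = 1$ being the abelian surfaces with $q = 2$). For $G \ne 1$ the rotations have fixed points on $E_\lam$, so $E_\lam/K = \pp^1$, giving the second fibration in the statement.

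The main obstacle is the final enumeration of the admissible pairs $(E_\lam, K)$. Writing a general element of $K$ as $x \mapsto \zeta x + t$ with $\zeta$ a root of unity and $t \in E_\lam$, the requirement $E_\lam / K = \pp^1$ forces the rotation group $G = \langle \zeta \rangle$ to have order $2$, $3$, $4$, or $6$; by the classification of $\mathrm{Aut}(E_\lam, 0)$ this pins down $\lam$ (arbitrary when $|G| = 2$, $\lam = i$ when $|G| = 4$, and $\lam = e^{2\pi i/3}$ when $|G| \in \{3, 6\}$). Imposing that $K$ be abelian forces every pure translation $(s, 1) \in K$ to satisfy $(1 - \zeta)s = 0$, i.e.\ $s$ must be a torsion point fixed by the rotation; a direct computation of $\ker(1 - \zeta)$ on $E_\lam$ shows this fixed-point group is $(\Z/2\Z)^2$ for $\zeta = -1$, cyclic of order $2$ for $\zeta = i$, cyclic of order $3$ for $\zeta = e^{2\pi i/3}$, and trivial for $\zeta = e^{\pi i/3}$. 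Combined with the rank-at-most-two constraint on $K$, this leaves exactly the possibilities $K = \Z/2\Z$ or $\Z/2\Z \times \Z/2\Z$ (order-$2$ rotation), $K = \Z/4\Z$ or $\Z/4\Z \times \Z/2\Z$ (order-$4$ rotation), $K = \Z/3\Z$ or $\Z/3\Z \times \Z/3\Z$ (order-$3$ rotation), and $K = \Z/6\Z$ (order-$6$ rotation). Normalizing the admissible translation parts up to isomorphism—this is where the explicit torsion points $\al_2$, $\frac{1+\lam}{2}$, and $\frac{1-\lam}{3}$ appear—and checking that distinct data yield non-isomorphic surfaces produces precisely the seven types listed. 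I expect this last bookkeeping, together with the verification that each normalized datum gives a free action with the asserted invariants, to be the most delicate part, the remaining input being entirely furnished by the two fibrations constructed above.
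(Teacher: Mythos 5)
This statement is the classical Bagnera--de Franchis theorem; the paper does not prove it at all, but simply quotes it with a reference to \cite[Ch.~VI]{Bea}, so there is no proof in the paper to compare against. Your sketch is essentially the standard argument from that reference, and its architecture is the right one: reduce to numerical invariants, extract the two fibrations, split the isotrivial bundle after an \'etale base change, and enumerate $K$ via the commutation constraint. The final bookkeeping is sound: commutativity of $K$ does force every pure translation $s$ to satisfy $(1-\zeta)s = 0$, your computations of $\ker(1-\zeta)$ (namely $(\Z/2\Z)^2$ for $\zeta = -1$, $\Z/2\Z$ for $\zeta = i$, $\Z/3\Z$ for $\zeta = e^{2\pi i/3}$, trivial for $\zeta = e^{\pi i/3}$) are correct, and they yield exactly the seven listed types.

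However, two of your reduction steps have genuine gaps as written. First, the claim that $\kappa = 0$ and $q = 1$ force $p_g = 0$ does not follow from $\chi(\mathcal{O}_S) \ge 0$: with $q = 1$ that inequality reads $p_g \ge 0$, which is vacuous. Since $\kappa = 0$ gives $p_g \le 1$, the real content is excluding $(p_g, q) = (1,1)$, and this needs an actual argument, e.g.\ the canonical bundle formula applied to the Albanese fibration (if the fibers are elliptic, $\chi(\mathcal{O}_S) = 1 > 0$ makes $K_S$ dominate the pullback of a positive-degree divisor on the base, so $\kappa(S) = 1$; if the fibers have genus $\ge 2$, subadditivity of Kodaira dimension over a curve gives $\kappa(S) \ge 1$). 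Second, the Euler number argument does not show that the Albanese map is a fiber bundle: a multiple fiber $mE$ with $E$ a smooth elliptic curve contributes $e(E) = 0$ to the sum, so $e(S) = 0$ only rules out fibers with singular reduction, and the fibration could a priori still be a quasi-bundle with multiple fibers. These are again excluded by the canonical bundle formula: over an elliptic base with $\chi(\mathcal{O}_S) = 0$ one has $K_S \equiv \sum_i (m_i - 1)E_i$, and any $m_i > 1$ would force $\kappa(S) = 1$. Both gaps are fixable by these standard arguments, after which the rest of your outline goes through; but as stated, neither of those two steps is a proof.
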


Given $E_{\lam}$, $E_{\tau}$ and $K$ as in Theorem \ref{bagnera}, let $B_{\lam, \tau}=(E_{\lam}\times E_{\tau})/K$ denote the associated bielliptic surface. The natural projections from $E_{\lam}\times E_{\tau}$ onto its factors give maps
\[
\psi_{1}: B_{\lam, \tau}\rightarrow E_{\lam}/ K\cong \pp^{1}, \quad \psi_{2}: B_{\lam, \tau}\rightarrow E_{\tau}/ K.
\]
Clearly, the map $\psi_{1}$ must have multiple fibers, while $E_{\tau}/ K$ is an elliptic curve and the map $\psi_{2}$ is precisely the Albanese map for the surface $B_{\lam, \tau}$. In particular, the $\Z$-rank of the homology group $\text{H}_{1}(B_{\lam, \tau}; \Z)$ is always two. Depending on the group $K$, the associated bielliptic surface $B_{\lam, \tau}$ may or may not have torsion in $\text{H}_{1}(B_{\lam, \tau}; \Z)$. For the computation of these torsion groups we refer to \cite{Ser90}. Finally, we note that, since the fundamental group is a birational invariant, we have that the first $\Z$-homology group of a blown-up bielliptic surface is the same as the first $\Z$-homology group of its minimal model.

\section{Proof of Theorem \ref{thm:MainThm1}}\label{ssec:Hirz}

In this section we explicitly construct the surfaces $X_{n}$. Let $\rho = e^{2\pi i / 3}$ and $n$ be any positive natural number. Then $\Del_n = \Z[n, 1-\rho]$ is a lattice in $\C$ for each $n \ge 1$, with $\Del = \Del_1 = \Z[\rho]$. We then have elliptic curves $G_{n}=\C / \Del_n$, and let $G = G_1$. Define $a=\frac{1-\rho}{3}$, set $A_{n} = G \times G_{n}$, let $[w, z]$ be coordinates on $A$, and consider the curves
\[
E_{1}=[z, z], \quad E_{2}=[\rho z-a, z], \quad E_{3}=[\rho^{2}z-2a, z].
\]
Then, for any $i\neq j$ we have
\[
E_{i}\cap E_{j}=\underset{0 \le m \le n-1}{\bigcup_{0 \le l \le 2}} \Big[ \frac{2}{3}+la, \frac{2}{3}+la+m \Big].
\]

Next, consider the degree three automorphism $\varphi: A_{n}\rightarrow A_{n}$ given by
\[
\varphi([w, z])=[\rho w, z+a],
\]
and let $\pi_{n}: A_{n}\rightarrow B_{n}$ be the associated degree three \'etale cover. Then $B_{n}$ is a bielliptic surface with Albanese map $\text{Alb}_{n}: B_{n} \rightarrow \C / \Z[n, a]$. Next, we observe that
\[
\varphi(E_{1})=E_{2}, \quad \varphi(E_{2})=E_{3}, \quad \varphi(E_{3})=E_{1},
\]
so that the image in $B_{n}$ of the curves $E_{1}$, $E_{2}$ and $E_{3}$ is a singular irreducible curve $C_{n}$ with exactly $n$ regular singular points of degree three. For $1\leq j\leq n$, let $F_{j}$ denote the fiber of $\text{Alb}_{n}$ over the point $[\frac{2}{3}+j-1]\in \C/\Z[n, a]$. The fibers $F_{j}$ intersect the curve $C_{n}$ in its $n$ singular points.

We now claim that by blowing up these $n$ points we obtain a smooth toroidal compactification. To see this, let $X_{n}$ be the blowup of $B_{n}$ at the $n$ singular points of the curve $C_{n}$. Then $K_{X_n}^2 = -n$ and $\chi(X_n) = n$. Let $T_{0}$ be the proper transform of $C_{n}$ in $X_{n}$ and for $i=1, ..., n$ let $T_{i}$ be the proper transform of $F_{i}$ in $X_{n}$. We have $T_0^2 = -3 n$ and $T_i^2 = -1$ for $1 \le i \le n$. Consider $D_{n}=\sum^{n}_{i=0}T_{i}$. Then, the pair $(X_{n}, D_{n})$ satisfies
\begin{align*}
\overline{c}^{2}_{1}(X_{n}, D_{n}) &= (K_{X_{n}}+D_{n})^{2} \\
&= K^{2}_{X_{n}}-T^{2}_{0} - \cdots - T^{2}_{n} \\
&= -n+3n+n \\
&= 3\overline{c}_{2}(X_{n}, D_{n}).
\end{align*}
The construction of $D_n$ implies immediately that $K_{X_n} + D_n$ is big and nef, hence $(X_{n}, D_{n})$ is of log-general type. We already saw that $(K_{X_n} + D_n)^2 > 0$. To see that $K_{X_n} + D_n$ is nef, one uses the above calculations to show that every curve not contained in the support of $D_n$ intersects $K_{X_{n}}+D_{n}$ positively. Therefore every irreducible curve on $X_n$ intersects $K_{X_n} + D_n$ nonnegatively. Since any nef divisor with positive self-intersection is big, we conclude that $K_{X_{n}}+D_{n}$ is a big and nef divisor. It follows immediately that $X_{n}\ssm D_{n}$ is biholomorphic to $\B^{2}/\Gam_{n}$ for some nonuniform torsion-free lattice $\Gam_{n}\in\PU(2, 1)$.

Next, we must show that the lattices $\Gam_{n}$ are indeed \emph{neat}. To see this, first notice that the surfaces $X_{n}$ form a tower of coverings
\[
\cdots \rightarrow X_{n}\rightarrow X_{n-1}\rightarrow \cdots \rightarrow X_{1},
\]
which induces a tower of coverings of the associated ball quotients
\[
\cdots \rightarrow \B^{2}/\Gam_{n}\rightarrow \B^{2}/\Gam_{n-1}\rightarrow \cdots \rightarrow \B^{2}/\Gam_{1}.
\]
In particular, the lattices $\Gam_{n}$ are nested in a sequence of subgroups of $\Gam_{1}$
\[
\cdots \subset \Gam_{n} \subset \Gam_{n-1} \subset \cdots \subset \Gam_{1}.
\]
As shown in \cite{DS15b}, the lattice $\Gam_{1}$ is neat, and this suffices to imply neatness of all the $\Gam_{n}$. More precisely, the surface $\B^{2}/\Gam_{1}$ corresponds to Example $2$ in Section $6$ of \cite{DS15b}.

It remains to compute the volumes of the surfaces $\B^{2}/\Gam_{n}$. Since $\chi(\B^{2}/\Gam_{n})=\chi(X_{n})=n$, we conclude that the ball quotient surfaces $\B^{2}/\Gam_{n}$ saturate the whole volume spectrum since, as recalled in \S \ref{sec:Prelim}, we have
\[
\text{Vol}_{-1}(\B^{2}/\Gam_{n})=\frac{8}{3}\pi^{2}\chi(\B^{2}/\Gam_{n}) = \frac{8}{3} \pi^2 n.
\]
The proof of Theorem \ref{thm:MainThm1} is therefore complete.

\section{Proof of Theorems \ref{thm:MainThm2} and \ref{thm:MainSTCb1}}\label{sec:Proof}

It has been noticed many times in the literature that if $X$ is a smooth toroidal compactification of the ball quotient manifold $M = \B^n / \Gam$, then $b_1(M) \ge b_1(X)$, where $b_1$ denotes the first betti number, i.e., the rank of $H_1$ with $\Q$ coefficients. For a general result, applicable not only to ball quotient compactifications, we refer to \cite[Prop.\ 2.10]{Kollar}. Further, Murty and Ramakrishnan showed that $b_1(X)$ equals the first $L^2$ betti number of $M$, and that $b_1(M) = b_1(X)$ when $n \ge 3$ \cite{Murty--Ramakrishnan}. Kasparian and Sankaran \cite[Cor.\ 4.5]{Kasparian--Sankaran} proved that $b_1(M) = b_1(X)$ when $n = 2$ using the fundamental group, and we now give a very elementary proof of that result (cf.\ \cite{Zucker}).

\begin{pf}[Proof of Theorem \ref{thm:MainSTCb1}]
Consider a smooth toroidal compactification $(X, D)$ of the ball quotient manifold $M$, where $D$ consists of $k$ disjoint elliptic curves. Choose an open neighborhood $U$ of $D$ consisting of $k$ mutually disjoint open sets, one for each irreducible component of $D$. In what follows, we use $H_i$ to denote homology with $\Q$ coefficients, since we do not care about torsion.

Then $U$ deformation retracts on $k$ disjoint $2$-tori. Thus, we have:
\[
H_i(U) =
\begin{cases}
\Q^k &  i=0, 2\\
\Q^{2 k} & i = 1\\
\{0\} & i=3, 4
\end{cases}
\]
Now define $V = U \ssm D$, so $V$ deformation retracts on the disjoint union of $k$ closed Nil $3$-manifolds. We then have:
\begin{equation*}
H_{i}(V) =
\begin{cases}
\Q^k & i=0, 3\\
\Q^{2 k} & i = 1,2 \\
\{0\} & i=4
\end{cases}
\end{equation*}
Recall that any closed Nil 3-manifold $N$ arising as the cusp cross section in a smooth toroidal compactification is a circle bundle over a $2$-torus satisfying
\[
H_{1}(N^{3}; \Z)=\Z^{2}\oplus\text{Torsion},
\]
with the torsion part depending on the specific nilmanifolds, while $H_{2}(N; \Z)$ is always torsion-free equal to $\Z^{2}$ by duality. For these facts we refer to \cite{Thurston}.

Next, we apply the Mayer--Vietoris sequence to $X = M \cup U$. First, we consider
\[
\cdots\rightarrow H_1(X) \rightarrow H_0(V) \rightarrow H_0(M)\oplus H_0(U) \rightarrow H_0(X)\rightarrow \{0\}
\]
which gives
\[
\cdots\rightarrow H_1(X) \rightarrow \Q^k \rightarrow \Q \oplus \Q^k \rightarrow \Q \rightarrow \{0\}
\]
and it follows that $H_1(X) \to H_0(V)$ is zero. This gives an exact sequence
\[
\cdots\rightarrow H_2(X) \rightarrow H_{1}(V) \rightarrow H_{1}(M)\oplus H_{1}(U) \rightarrow H_{1}(X)\rightarrow \{0\}
\]
that becomes
\[
\cdots\rightarrow H_2(X) \rightarrow \Q^{2 k} \rightarrow H_{1}(M)\oplus \Q^{2 k} \rightarrow H_{1}(X)\rightarrow \{0\}
\]
It follows immediately that $b_1(M) + 2 k = 2 k - \ell + b_1(X)$, where $\ell$ is the dimension of the image of the map $H_2(X) \to H_1(V)$. In other words, $b_1(M) = b_1(X) - \ell$. However, as discussed above, it is well-known that $b_1(M) \ge b_1(X)$, so $\ell = 0$ and the theorem follows.
\end{pf}

%

\begin{rem}
It is not necessarily the case that $H_1(M; \Z) \cong H_1(X; \Z)$. Indeed, we can construct examples, closely related to the examples in this paper, where $H_1(M; \Z)$ has torsion but $H_1(X; \Z)$ is torsion-free.
\end{rem}

\begin{rem}
The remainder of the Mayer--Vietoris sequence reduces to an exact sequence:
\[
\{0\} \to \Q^{k - 1} \to H_3(M) \to H_3(X) \to \Q^{2 k} \to H_2(M) \oplus \Q^k \to H_2(X) \to \{0\}
\]
The image in $H_3(M)$ of $\Q^{k - 1}$ is generated by any $k - 1$ of the cusp cross-sections of $M$ (the $k^{th}$ is clearly linearly dependent, since the union of all the cusp cross-sections obviously bounds). For example, one can then conclude that the betti numbers of $M$ satisfy:
\begin{align*}
b_3(M) & \ge k - 1 \\
b_2(M) - b_3(M) &= 1 - b_3(X) + b_2(X)
\end{align*}
\end{rem}

We are now ready to prove Theorem \ref{thm:MainThm2}.


\begin{proof}[Proof of Theorem \ref{thm:MainThm2}]
Recall the surfaces $X_n$ from Theorem \ref{thm:MainThm1}. For any $n$, let $Alb_{n}: B_{n}\rightarrow E_{n}=\C/\Z[n, a]$ be the Albanese map and let $\pi_{n}: X_{n}\rightarrow B_{n}$ be the blowup map. Consider $\psi_{n}: M_{n}\rightarrow E_{n}$, which is the composition $\psi_{n}=Alb_{n}\circ\pi_{n}\circ i_{n}$, where $i_{n}: M_{n}\rightarrow X_{n}$ is the inclusion. We then obtain a surjective morphism
\[
(\psi_{n})_{*}: \pi_{1}(M_{n})\rightarrow \pi_{1}(E_{n})\cong\Z^{2}.
\]

The generic fiber $F_{n}$ of the surjective fibration $\psi_{n}: M_{n}\rightarrow E_{n}$ is a reduced torus with three punctures. The singular fibers of this fibrations are reduced smooth rational curves with four punctures. Note that there are exactly $n$ singular fibers corresponding with each of the $n$ exceptional divisors in $X_{n}$.

By Lemma 1.5 in \cite{Nor83}, the sequence
\[
\pi_{1}(F_{n})\rightarrow\pi(M_{n})\rightarrow\pi(E_{n})\cong\Z^{2}\rightarrow 1
\]
is exact. We therefore conclude that $\text{Ker}((\psi_{n})_{*})$ is finitely generated for any $n$. Also, the free rank of $\text{H}_{1}(M_{n}; \Z)$ is always two by Theorem \ref{thm:MainSTCb1}. On the other hand, we have that $\Gam_{n}/\text{Ker}((\psi_{n})_{*})\cong \Z^{2}$ for any $n$. It follows that the commutator subgroup $[\Gam_{n}, \Gam_{n}]$ is of finite index in $\text{Ker}((\psi_{n})_{*})$. Since finite index subgroups of finitely generated groups are finitely generated, the proof is complete.
\end{proof}

\begin{rem}
It follows from arguments in \cite{Kap} that $[\Gam_n, \Gam_n]$, while finitely generated, cannot be finitely presented.
\end{rem}

\section{Two-cusped examples}\label{App}

In this section, we explicitly construct a second distinct family of ball quotients $\B^{2}/\Lam_{n}$ which can be alternatively used in the proofs of the main theorems presented in this paper. These ball quotients have toroidal compactifications biholomorphic to the nonminimal bielliptic surfaces $X_{n}$ constructed above. In other words, for any $n$, even if the ball quotients $\B^{2}/\Gam_{n}$ and $\B^{2}/\Lam_{n}$ are not biholomorphic, they nevertheless have biholomorphic smooth toroidal compactifications. For many more examples and a detailed study of the multiple realizations problem of varieties as ball quotient compactifications, we refer to \cite{DS15a}.

The most natural way to distinguish the surfaces $\B^{2}/\Lam_{n}$ from the surfaces $\B^{2}/\Gam_{n}$ is to look at their cusps. In particular, we will show that all of surfaces in $\B^{2}/\Lam_{n}$ have exactly two cusps, while we already know that for any $n\geq 1$ the surface $\B^{2}/\Gam_{n}$ has $n+1$ cusps. This is clearly enough to show that the two families are distinct for $n\geq 2$. For $n=1$ this is still the case but the argument is different and we refer to end of this section for details.

\bigskip

As before, let $\rho = e^{2\pi i / 3}$ and let $n$ be a any positive natural number. Let the lattices $\Del_n$ in $\C$ be as above with $G_{n}=\C / \Del_n$ the associated elliptic curves. Again set $a=\frac{1-\rho}{3}$,  $A_{n} = G \times G_{n}$, let $[w, z]$ be coordinates on $A$, and consider the curves:
\begin{align*}
E_{1}=[z, z], \quad E_{2}=[\rho z-a, z], \quad E_{3}=[\rho^{2}z-2a, z].
\end{align*}
Recall that if $i \neq j$, then
\[
E_{i}\cap E_{j} = \underset{0 \le l \le 2}{\bigcup_{0 \le m \le n - 1}}\Big[\frac{2}{3}+la, \frac{2}{3}+la+m\Big].
\]

We again consider $\varphi: A_{n}\rightarrow A_{n}$ given by
\[
\varphi([w, z])=[\rho w, z+a],
\]
which satisfies
\[
\varphi(E_{1})=E_{2}, \quad \varphi(E_{2})=E_{3}, \quad \varphi(E_{3})=E_{1},
\]
and let $\pi_{n}: A_{n}\rightarrow B_{n}$ the associated degree three \'etale cover. Then $B_{n}$ is a bielliptic surface with Albanese map $\text{Alb}_{n}: B_{n}\rightarrow \C/\Z[n, a]$. The image in $B_{n}$ of the curves $E_{1}$, $E_{2}$ and $E_{3}$ is a singular irreducible curve $C_{n}$ with exactly $n$ regular singular points of degree three.

Now we diverge from the previous construction. Consider
\begin{align*}
H_{1}= \Big[\frac{2}{3}, z\Big], \quad H_{2}=\Big[\frac{2}{3}+a, z\Big], \quad H_{3}=\Big[\frac{2}{3}+2a, z\Big],
\end{align*}
and observe that
\[
\varphi(H_{1})=H_{2}, \quad \varphi(H_{2})=H_{3}, \quad \varphi(H_{3})=H_{1}.
\]
To prove this, it suffices to compute that $\frac{2}{3}+a=\frac{2\rho}{3}$ and $\frac{2}{3}+2a=\frac{2\rho^{2}}{3}$ modulo $\Del$. In particular, the image in $B_{n}$ of the curves $H_{1}$, $H_{2}$ and $H_{3}$ is a single smooth elliptic curve $F_{n}$ that is a smooth fiber of the map $\phi_{n}: B_{n}\rightarrow G/\Z_{3}\cong \pp^{1}$.

The curves $C_{n}$ and $F_{n}$ meet transversally exactly in the $n$ singular points of $C_{n}$. Thus, let $X_{n}$ be the blowup of $B_{n}$ at the $n$ singular points of $C_{n}$, $T_{0}$ denote the proper transform of $C_{n}$ in $X_{n}$, and $T_{1}$ be the proper transform of $F_{n}$ in $X_{n}$. Set $D_{n}=\sum^{1}_{i=0}T_{i}$. Again, one checks that $\overline{c}^{2}_{1}(X_{n}, D_{n}) = 3\overline{c}_{2}(X_{n}, D_{n})$ with $K_{X_n} + D_n$ big and nef, so the pair $(X_{n}, D_{n})$ is a smooth toroidal compactification. Thus, $X_{n}\ssm D_{n}$ is biholomorphic to $\B^{2}/\Lam_{n}$ for some nonuniform torsion-free lattice in $\Lam_{n}\in\PU(2, 1)$. It follows immediately from the construction that $\B^{2}/\Lam_{n}$ has exactly two cusps.

The surface $\B^{2}/\Lam_{1}$ corresponds to Example $3$ in Section $6$ of \cite{DS15b}, and it follows as with the previous examples that the lattices $\Lam_n$ are neat. The volume calculations are also exactly the same. The family $\B^{2}/\Lam_{n}$ appears to be the first family of $2$-cusped ball quotients that saturate the entire volume spectrum (see \cite{Sto15} for $4$-cusped examples). This completes the proof of Theorem \ref{thm:MainThm3}. \qed


\end{document}